\newtheorem{theorem}{Theorem}[section]
\newtheorem{lemma}[theorem]{Lemma}
\newtheorem{remark}{Remark}[section]
\newtheorem{example}{Example}[section]
\DeclareMathOperator{\divergence}{div}
\DeclareMathOperator*{\supp}{supp}
\title{Preservation of Piecewise Constancy under TV Regularization with Rectilinear Anisotropy\thanks{The final authenticated publication is available online at \newline\protect\url{https://doi.org/10.1007/978-3-030-22368-7_40}}}
\author[1]{Clemens Kirisits}
\author[1]{Eric Setterqvist}
\author[1,2]{Otmar Scherzer}
\affil[1]{Faculty of Mathematics, University of Vienna, Vienna, Austria}
\affil[2]{Johann Radon Institute for Computational and Applied Mathematics (RICAM), Austrian Academy of Sciences, Linz, Austria}
\date{March 11, 2019}
\begin{document}

\maketitle

\begin{abstract}
A recent result by \L{}asica, Moll and Mucha about the $\ell^1$-anisotropic Rudin-Osher-Fatemi model in $\mathbb{R}^2$ asserts that the solution is piecewise constant on a rectilinear grid, if the datum is.
By means of a new proof we extend this result to $\mathbb{R}^n$.
The core of our proof consists in showing that averaging operators associated to certain rectilinear grids map subgradients of the $\ell^1$-anisotropic total variation seminorm to subgradients.
\end{abstract}

\section{Introduction}\label{sec:intro}

This article is concerned with a variant of the Rudin-Osher-Fatemi (ROF) image denoising model \cite{Rudin1}. More specifically, we consider minimization of
\begin{equation}\label{eq:l1rof}
	\frac{1}{2}\|u-f\|^2_{L^2} + \alpha J(u),
\end{equation}
where $J(u) = \int_\Omega \|\nabla u(x)\|_{\ell^1}dx$ is the total variation with $\ell^1$-anisotropy. This model and variations thereof have been used in imaging applications for data exhibiting a rectilinear geometry \cite{BerBurDroNem06,choksi2011,SanOzkRomGok18,Setzer1}. Numerical algorithms for minimizing \eqref{eq:l1rof} have been studied, for example, in \cite{chen2015,GolOsh09,LiSan96}.

The $\ell^1$-anisotropic total variation has a special property from a theoretical point of view as well. It has been shown in \cite[Thm.~3.4, Rem.~3.5]{Casas99} that approximation of a general $u\in BV\cap L^p$ by functions $u_m$ piecewise constant on rectilinear grids, in the sense that
	$$ \| u - u_m\|_{L^p} \to 0 \qquad \text{and} \qquad J(u_m) \to J(u), $$
is not possible for $J(u) = \int_\Omega \|\nabla u(x)\|_{\ell^q}dx$, unless $q=1$.

Let $\Omega\subset \mathbb{R}^n$ be a finite union of hyperrectangles, each aligned with the coordinate axes. Our main result, Theorem \ref{thm:fpcrupcr}, states that if the given function $f:\Omega \to \mathbb{R}$ is piecewise constant on a rectilinear grid, then the minimizer of \eqref{eq:l1rof} is too. This extends a recent result by \L{}asica, Moll and Mucha about two-dimensional domains \cite[Thm. 5]{Lasica1}. Their proof is based on constructing the solution by means of its level sets and relies on minimization of an anisotropic Cheeger-type functional over subsets of $\Omega$. 

The proof we present below is centred around the averaging operator $A_G$ associated to the grid $G$ on which $f$ is piecewise constant. In addition to being a contraction, it has the crucial property of mapping subgradients of $J$ to subgradients, that is, $A_G (\partial J(0)) \subset \partial J(0)$, see Theorem \ref{prop:projection}. Combined with the dual formulation of \eqref{eq:l1rof} we obtain that the minimizer must be piecewise constant on the same grid as $f$. While it might be possible to extend the techniques of \cite{Lasica1} to higher dimensions, we believe that modifying the so-called ``squaring step" in the proof of \cite[Lem.~2]{Lasica1} could lead to difficulties.

Theorem \ref{thm:fpcrupcr} implies that, if $f$ is piecewise constant on a rectilinear grid, then minimization of functional \eqref{eq:l1rof} becomes a finite-dimensional problem. More precisely, in this case the solution can be found by minimizing a discrete energy of the form
\begin{align*}
	\sum_i w_i |u_i - f_i|^2 + \alpha \sum_{i,j} w_{ij} |u_i-u_j|,
\end{align*}
where the weights $w_i,w_{ij}\ge 0$ depend only on the grid. For problems of this sort there are many efficient algorithms, such as graph cuts \cite{Cha05,ChaDar09,DarSig06a}. Extending the preservation of piecewise constancy to domains $\Omega \subset \mathbb{R}^n$, $n\ge 3$, means that the discrete reformulation can also be exploited for processing higher dimensional data such as volumetric images or videos.

This article is organized as follows. Section \ref{sec:setting} contains the basic concepts that will be required throughout. In Section \ref{sec:pcr} we define several spaces of piecewise constant functions, while Section \ref{sec:rof} is devoted to the $\ell^1$-anisotropic ROF model. Section \ref{sec:main} is the main part of this paper. It starts with introducing the averaging operator $A_G$ and ends with Theorem \ref{thm:fpcrupcr}. The article is concluded in Section \ref{sec:conclusion}.

\section{Mathematical preliminaries} \label{sec:setting}

\subsection{PCR functions} \label{sec:pcr}

In this section we introduce several notions related to functions which are piecewise constant on rectilinear subsets of $\mathbb{R}^n$. Some of these are $n$-dimensional analogues of notions from \cite[Sec.\ 2.3]{Lasica1}.

A bounded set $R\subset \mathbb{R}^n$ which can be written as a Cartesian product of $n$ proper intervals is called an \emph{$n$-dimensional hyperrectangle.} Recall that an interval is proper, if it is neither empty nor a singleton. Finite unions of $n$-dimensional hyperrectangles will be referred to as \emph{rectilinear $n$-polytopes}.

A \emph{rectilinear grid}, or simply \emph{grid}, is a finite family of affine hyperplanes, each being perpendicular to one of the coordinate axes of $\mathbb{R}^n$. For a rectilinear $n$-polytope $P$ we denote by $G(P)$ the smallest grid with the property that the union of all its affine hyperplanes contains the entire boundary of $P$.

Throughout this article $\Omega \subset \mathbb{R}^n$ is an \emph{open} rectilinear $n$-polytope. A finite family of rectilinear $n$-polytopes $\mathcal{Q}=\{P_1,\ldots, P_N\}$ is called a \emph{partition} of $\Omega$, if they have pairwise disjoint interiors and the union of their closures equals $\overline{\Omega}$. Every grid $G$ defines a partition $\mathcal{Q}(G)$ of $\Omega$ into rectilinear $n$-polytopes in the following way: $P\subset \Omega$ belongs to $\mathcal{Q}(G)$, if and only if its boundary is contained in $\partial \Omega \cup \bigcup G$ while $\mathrm{int}\, P $ and $ \bigcup G$ are disjoint. Note that if $G$ contains $G(\Omega)$, then $\mathcal{Q}(G)$ consists only of hyperrectangles. 

We adopt the notation $PCR(\Omega)$, or simply $PCR$, from \cite{Lasica1} for the set of all integrable functions $f:\Omega \to \mathbb{R}$ which can be written as finite linear combinations of indicator functions of rectilinear $n$-polytopes. That is, $f \in PCR$ if there is an $N\in \mathbb{N}$, $c_i \in \mathbb{R}$ and rectilinear $n$-polytopes $P_i \subset \Omega$, $1 \le i \le N$, such that
\begin{equation}\label{eq:pcrfunction}
	f = \sum_{i=1}^N c_i \mathbf{1}_{P_i}
\end{equation}
almost everywhere. Here, $\mathbf{1}_{A}$ is the indicator function of the set $A$, defined by
$$	\mathbf{1}_{A}(x) =
	\begin{cases}
	1,	& x\in A, \\
	0,	& x \notin A.
	\end{cases}
$$
We can assume, without loss of generality, that the values $c_i$ are pairwise distinct and that the polytopes $P_i$ are pairwise disjoint, which makes the representation \eqref{eq:pcrfunction} unique almost everywhere. To every $f \in PCR$ we associate its \emph{minimal grid}, that is, the unique smallest grid covering the boundaries of all level sets of $f$, given by
\begin{equation*}
	G_f = \bigcup_{i=1}^N G(P_i).
\end{equation*}
Note that the partition $\mathcal{Q}(G_f)$ always consists of hyperrectangles only. See Figure \ref{fig:grid} for an illustration of $G_f$ and $\mathcal{Q}(G_f).$
\begin{figure}[t]
 \centering
\begin{tikzpicture}[scale=0.5]
 \draw[fill=black!10] (0,0) -- (5.5,0) -- (5.5,3) -- (0,3) -- cycle;
 \draw[fill=black!20] (6,0) -- (6,3) -- (7.5,3) -- (7.5,6) -- (9,6) -- (9,0) -- cycle;
 \draw[fill=black!30] (0,3) -- (0,7) -- (9,7) -- (9,6) -- (7.5,6) -- (7.5,3) -- cycle;
 \draw[fill=black!60] (2,2.5) -- (2,5) -- (4,5) -- (4,2.5) -- cycle;
 \draw[fill=black!40] (5.5,0) -- (5.5,4) -- (6,4) -- (6,0) -- cycle;
\end{tikzpicture}
\qquad
\begin{tikzpicture}[scale=0.5]
 \draw[line width=1pt] (0,0) -- (5.5,0) -- (5.5,3) -- (4,3);
 \draw[line width=1pt] (0,0) -- (0,3);
 \draw[line width=1pt] (6,0) -- (6,3) -- (7.5,3) -- (7.5,6) -- (9,6) -- (9,0) -- cycle;
 \draw[line width=1pt] (2,3)-- (0,3) -- (0,7) -- (9,7) -- (9,6) -- (7.5,6) -- (7.5,3) -- (6,3);
 \draw[line width=1pt] (2,2.5) -- (2,5) -- (4,5) -- (4,2.5) -- cycle;
 \draw[line width=1pt] (5.5,0) -- (5.5,4) -- (6,4) -- (6,0) -- cycle;
 \end{tikzpicture}
 
\vspace{2em}
\begin{tikzpicture}[scale=0.5]
 \draw[line width=1pt] (0,0) -- (5.5,0) -- (5.5,3) -- (4,3);
 \draw[line width=1pt] (0,0) -- (0,3);
 \draw[line width=1pt] (6,0) -- (6,3) -- (7.5,3) -- (7.5,6) -- (9,6) -- (9,0) -- cycle;
 \draw[line width=1pt] (2,3)-- (0,3) -- (0,7) -- (9,7) -- (9,6) -- (7.5,6) -- (7.5,3) -- (6,3);
 \draw[line width=1pt] (2,2.5) -- (2,5) -- (4,5) -- (4,2.5) -- cycle;
 \draw[line width=1pt] (5.5,0) -- (5.5,4) -- (6,4) -- (6,0) -- cycle;
 \draw[dashed,line width=0.25pt] (2,0) -- (2,7);
 \draw[dashed,line width=0.25pt] (4,0) -- (4,7);
 \draw[dashed,line width=0.25pt] (2,0) -- (2,7);
 \draw[dashed,line width=0.25pt] (4,0) -- (4,7);
 \draw[dashed,line width=0.25pt] (0,2.5) -- (9,2.5);
 \draw[dashed,line width=0.25pt] (0,5) -- (9,5);
 \draw[dashed,line width=0.25pt] (0,2.5) -- (9,2.5);
 \draw[dashed,line width=0.25pt] (0,5) -- (9,5);
 \draw[dashed,line width=0.25pt] (6,0) -- (6,7);
 \draw[dashed,line width=0.25pt] (0,6) -- (9,6);
 \draw[dashed,line width=0.25pt] (7.5,0) -- (7.5,7);
 \draw[dashed,line width=0.25pt] (0,3) -- (9,3);
 \draw[dashed,line width=0.25pt] (7.5,0) -- (7.5,7);
 \draw[dashed,line width=0.25pt] (0,3) -- (9,3);
 \draw[dashed,line width=0.25pt] (0,3) -- (9,3);
 \draw[dashed,line width=0.25pt] (0,4) -- (9,4);
 \draw[dashed,line width=0.25pt] (0,4) -- (9,4);
 \draw[dashed,line width=0.25pt] (5.5,0) -- (5.5,7);
\end{tikzpicture}
\qquad
\begin{tikzpicture}[scale=0.5]
 \draw[line width=1pt] (0,0) -- (5.5,0) -- (5.5,3) -- (4,3);
 \draw[line width=1pt] (0,0) -- (0,3);
 \draw[line width=1pt] (6,0) -- (6,3) -- (7.5,3) -- (7.5,6) -- (9,6) -- (9,0) -- cycle;
 \draw[line width=1pt] (2,3)-- (0,3) -- (0,7) -- (9,7) -- (9,6) -- (7.5,6) -- (7.5,3) -- (6,3);
 \draw[line width=1pt] (2,2.5) -- (2,5) -- (4,5) -- (4,2.5) -- cycle;
 \draw[line width=1pt] (5.5,0) -- (5.5,4) -- (6,4) -- (6,0) -- cycle;
 \draw[line width=1pt] (2,0) -- (2,2.5);
 \draw[line width=1pt] (4,0) -- (4,2.5);
 \draw[line width=1pt] (2,5) -- (2,7);
 \draw[line width=1pt] (4,5) -- (4,7);
 \draw[line width=1pt] (0,2.5) -- (2,2.5);
 \draw[line width=1pt] (0,5) -- (2,5);
 \draw[line width=1pt] (4,2.5) -- (9,2.5);
 \draw[line width=1pt] (4,5) -- (9,5);
 \draw[line width=1pt] (6,3) -- (6,7);
 \draw[line width=1pt] (0,6) -- (7.5,6);
 \draw[line width=1pt] (7.5,6) -- (7.5,7);
 \draw[line width=1pt] (2,3) -- (4,3);
 \draw[line width=1pt] (7.5,3) -- (7.5,0);
 \draw[line width=1pt] (7.5,3) -- (9,3);
 \draw[line width=1pt] (5.5,3) -- (6,3);
 \draw[line width=1pt] (0,4) -- (5.5,4);
 \draw[line width=1pt] (6,4) -- (9,4);
 \draw[line width=1pt] (5.5,4) -- (5.5,7);
\end{tikzpicture}
 \caption{Upper left: A PCR function $f$ on a planar domain $\Omega$. Each level set of $f$ is visualized using a different grey tone. Upper right: The boundaries of the level sets of $f$.
 Lower left: Extension of the boundaries of the level sets of $f$. Lower right: The minimal grid $G_{f}$ (lines) and the partition $\mathcal{Q}(G_{f})$ of $\Omega$ (rectangular cells).}
 \label{fig:grid}
\end{figure}
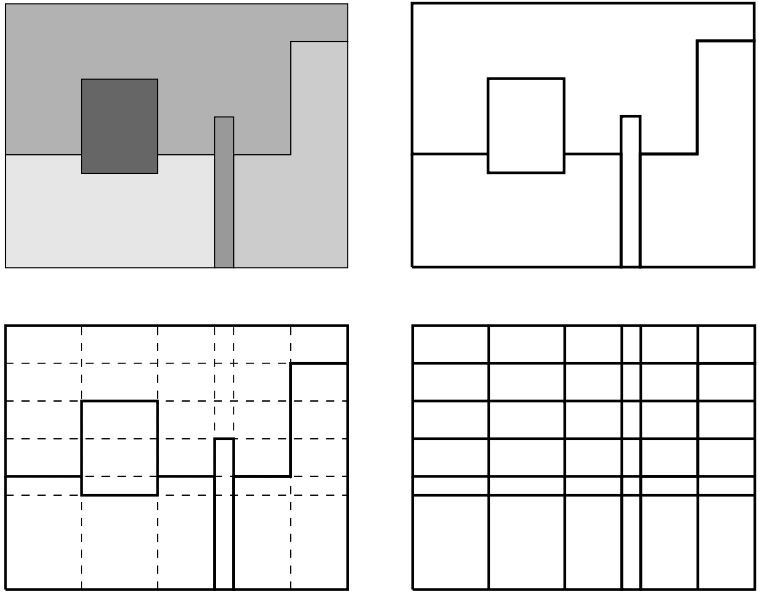

For a given grid $G$ we denote by $PCR_G$ the set of all functions in $PCR$ which are equal almost everywhere to a finite linear combination of indicator functions of $P_i \in \mathcal{Q}(G)$.

The following notions are essential for the proof of Theorem \ref{prop:projection}.
Fix an $i \in \{1,\ldots,n\}$ as well as coordinates $x_1,\ldots,x_{i-1},x_{i+1},\ldots,x_n$.
The set $\{x_i \in \mathbb{R} : (x_1,\ldots,x_n) \in \Omega\}$ is a union of finitely many disjoint intervals
	\begin{equation}\label{eq:intervals}
			I_i^k = (a_i^k,b_i^k), \quad k=1,\ldots,m.
	\end{equation}
Note that the number of intervals $m$ as well as the intervals $I_i^k$ themselves depend on, and are uniquely determined by, the coordinates $x_1,\ldots,x_{i-1},x_{i+1},\ldots,x_n$. For an illustration of the intervals $I_i^k$ see Example \ref{example} below.
Next, let $\mathcal{G}$ be the set of all rectilinear grids of $\mathbb{R}^n$. For every $G\in \mathcal{G}$ and $i \in \{1,\ldots,n\}$ we define
\begin{align*}
	\Gamma^{i}_{G} &= \left\{ g\in PCR_{G} : \underset{s\in (a^k_{i},b^k_{i})}{\sup}\left|\int^{s}_{a^k_{i}}g\,dx_{i}\right|\leq 1, \int^{b^k_{i}}_{a^k_{i}}g\,dx_{i}=0, 1\le k \le m \right\}.
\end{align*}
The restrictions on $g$ are to be understood for almost every
$$(x_1,\ldots,x_{i-1},x_{i+1},\ldots,x_n) \in \mathbb{R}^{n-1} $$
such that there is an $x_i\in \mathbb{R}$ satisfying $(x_1,\ldots,x_n) \in \Omega.$ The sum of the spaces $\Gamma^{i}_{G}$ is denoted by
\begin{align*} 
	\Gamma_{G} = \left\{\sum^{n}_{i=1}g_{i} : g_{i}\in\Gamma^{i}_{G}, 1 \le i \le n \right\},
\end{align*}
and we further set
$$\Gamma=\bigcup_{G\in \mathcal{G}}\Gamma_{G}.$$
\begin{remark}
The set $\Gamma_{G}$ consists of divergences of certain piecewise affine vector fields. More precisely, Theorem \ref{prop:projection} below implies that $\Gamma_G = \partial J(0) \cap PCR_G$, that is, $\Gamma_G$ is the set of all subgradients of $J$ which are piecewise constant on $G$.
\end{remark}
Finally, those elements of $\Gamma^{i}_{G}$ which have compact support in $\Omega$ are collected in the set $\Gamma^{i}_{G,c}$, and we define analogously
\begin{align*}
	\Gamma_{G,c}	&=	\left\{\sum^{n}_{i=1}g_{i} : g_{i}\in\Gamma^{i}_{G,c}, 1 \le i \le n \right \}, \\
	\Gamma_c		&=	\bigcup_{G\in \mathcal{G}}\Gamma_{G,c}.
\end{align*}
\begin{example}\label{example}
For the rectilinear $2$-polytope $\Omega$ of Figure \ref{fig:intervals} we have the following intervals $I_1^k$ and $I_2^k$
\begin{align*}
\{x_1 \in \mathbb{R} : (x_1,x_2) \in \Omega\}=
	\begin{cases}
		\left(0,6\right), & \text{if } x_2\in\left(0,1\right)\cup\left(2,3\right),\\
		\left(0,2\right)\cup\left(4,6\right), & \text{if } x_2\in\left[1,2\right],\\
		\left(3,6\right), & \text{if } x_2\in\left[3,4\right),
	\end{cases}
\end{align*}
and
\begin{align*}
\{x_2 \in \mathbb{R} : (x_1,x_2) \in \Omega\}=
	\begin{cases}
		\left(0,3\right), & \text{if } x_1\in\left(0,2\right),\\
		\left(0,1\right)\cup\left(2,3\right), & \text{if } x_1\in\left[2,3\right],\\
		\left(0,1\right)\cup\left(2,4\right), & \text{if } x_{1}\in\left(3,4\right],\\
		\left(0,4\right), & \text{if } x_1\in\left(4,6\right).
	\end{cases}
\end{align*}
\begin{figure}
\centering
\begin{tikzpicture}
 \draw[fill=black!10] (0,0) -- (6,0) -- (6,4) -- (3,4) -- (3,3) -- (0,3) -- (0,0);
 \draw[fill=white] (2,1) -- (4,1) -- (4,2) -- (2,2) -- (2,1);
 \draw	(0,0) node{$\bullet$};
 \draw	(0,-0.25) node{$(0,0)$};
 \draw	(6,0) node{$\bullet$};
 \draw	(6,-0.25) node{$(6,0)$};
 \draw	(6,4) node{$\bullet$};
 \draw	(6,4.25) node{$(6,4)$};
 \draw	(3,4) node{$\bullet$};
 \draw	(3,4.25) node{$(3,4)$};
 \draw	(3,3) node{$\bullet$};
 \draw	(3,2.75) node{$(3,3)$};
 \draw	(0,3) node{$\bullet$};
 \draw	(0,3.25) node{$(0,3)$};
 \draw	(2,1) node{$\bullet$};
 \draw	(2,0.75) node{$(2,1)$};
 \draw	(4,1) node{$\bullet$};
 \draw	(4,0.75) node{$(4,1)$};
 \draw	(4,2) node{$\bullet$};
 \draw	(4,2.25) node{$(4,2)$};
 \draw	(2,2) node{$\bullet$};
 \draw	(2,2.25) node{$(2,2)$};
 \end{tikzpicture}
 \caption{A rectilinear 2-polytope $\Omega$.}
 \label{fig:intervals}
\end{figure}
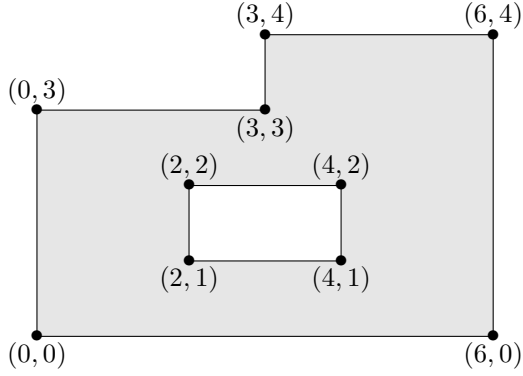
\end{example}

\subsection{The $\ell^1$-anisotropic ROF model} \label{sec:rof}

The notion of anisotropic total variation was introduced in \cite{AmaBel94}. In this article we exclusively consider one particular variant.

For every $\alpha > 0$ we denote by $\mathcal{B}_{\alpha}$ the set of all smooth compactly supported vector fields on $\Omega$ whose components are bounded by $\alpha$, that is,
\begin{align*} 
\mathcal{B}_{\alpha}=\left\{H\in\ C^{\infty}_{c}\left(\Omega,\mathbb{R}^{n}\right):\underset{1\leq i\leq n}{\max}\left|H_{i}(x)\right|\leq\alpha,\forall x\in\Omega
\right\}.
\end{align*}
The $\ell^1$-\emph{anisotropic total variation} $J:L^2(\Omega) \to \mathbb{R}\cup\{+\infty\}$ is given by
\begin{equation} \label{eq:tvsupportfunction}
	J(u) = \sup_{H\in \mathcal{B}_1} \int_\Omega u \divergence H \, dx = \sup_{h \in \overline{ \divergence \mathcal{B}_1}} \int_\Omega u h \, dx,
\end{equation}
where the bar denotes closure in $L^2(\Omega)$. Thus, $J$ is the support function of the closed and convex set $\overline{\divergence \mathcal{B}_1}$, which implies that $\overline{\divergence \mathcal{B}_1} = \partial J (0)$, or more generally
\begin{equation}\label{eq:divBa}
	\overline{\divergence \mathcal{B}_\alpha} = \alpha \partial J (0)
\end{equation}
for every $\alpha>0.$ If $u$ is a Sobolev function, then $J(u) = \int_\Omega \|\nabla u(x)\|_{\ell^1}dx.$

The next lemma states that the $\ell^1$-anisotropic ROF model is equivalent to constrained $L^2$-minimization. However, the way it is formulated it actually applies to every support function $J$ of a closed and convex subset of $L^2(\Omega)$.
\begin{lemma} \label{lemma:rofdual}
For every $\alpha>0$ and $f\in L^2{(\Omega)}$ the minimization problem
\begin{equation}\label{eq:rof}
	\min_{u\in L^2(\Omega)} \frac{1}{2}\|u-f\|^2_{L^2} + \alpha J(u)
\end{equation}
is equivalent to
	$$ \min_{u\in f - \alpha \partial J(0)} \|u\|_{L^2} .$$
\end{lemma}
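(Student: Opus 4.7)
The plan is to exploit the fact that, by \eqref{eq:tvsupportfunction} and \eqref{eq:divBa}, $\alpha J$ is the support function of the nonempty closed convex set $C := \alpha\partial J(0) \subset L^2(\Omega)$, so that the optimality conditions for \eqref{eq:rof} translate directly into a projection statement. First, I would observe that the functional in \eqref{eq:rof} is strictly convex, proper, and coercive on $L^2(\Omega)$, and therefore admits a unique minimizer $\hat u$, characterized by
\[
    f - \hat u \in \alpha \partial J(\hat u).
\]

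Next, I would invoke the standard convex-analytic identity that the subdifferential of a support function $\sigma_C$ at a point $u$ is precisely the set of maximizers of $\langle u, \cdot \rangle_{L^2}$ over $C$. Applied to $\alpha J = \sigma_C$ this yields
\[
    \alpha\partial J(u) = \bigl\{h \in C : \langle h, u\rangle_{L^2} = \alpha J(u)\bigr\}.
\]
Writing $\hat h := f - \hat u$, the primal optimality condition above is therefore equivalent to $\hat h \in C$ together with $\langle \hat h, \hat u\rangle_{L^2} = \alpha J(\hat u)$. In particular $\hat u \in f - C$, so $\hat u$ is admissible for the constrained problem.

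Finally, I would compare this with the variational characterization of the (unique) metric projection $\hat h$ of $f$ onto the closed convex set $C$, namely
\[
    \langle f - \hat h, h - \hat h\rangle_{L^2} \leq 0 \qquad \text{for every } h \in C,
\]
which is the optimality condition for $\min\{\|f - h\|_{L^2}^2 : h \in C\}$, or equivalently, after substituting $u = f - h$, for $\min\{\|u\|_{L^2} : u \in f - C\}$. Using $\hat u = f - \hat h$, this inequality rearranges to $\langle \hat u, h\rangle_{L^2} \leq \langle \hat u, \hat h\rangle_{L^2}$ for all $h \in C$. Taking the supremum over $h \in C$ and recalling that $\hat h \in C$, it is equivalent to $\langle \hat u, \hat h\rangle_{L^2} = \sup_{h \in C}\langle \hat u, h\rangle_{L^2} = \alpha J(\hat u)$, which is exactly the primal condition obtained in the preceding step. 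The two problems are therefore solved by the same $\hat u$.

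I do not foresee any serious obstacle. The only delicate point is handling the subdifferential of a support function in the $L^2(\Omega)$ setting rather than in $\mathbb{R}^n$, but since $J$ is proper, convex, and lower semicontinuous on $L^2(\Omega)$ and $C = \alpha\partial J(0)$ is already known to be closed and convex, this reduces to the familiar finite-dimensional statement.
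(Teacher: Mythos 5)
Your argument is correct, but it reaches the conclusion by a genuinely different route than the paper. The paper invokes Fenchel duality: it writes down the dual problem $\min_w \tfrac12\|w-f\|^2_{L^2}+(\alpha J)^*(w)$, cites Ekeland--Temam for the extremality relations $u_\alpha=f-w_\alpha$, $w_\alpha\in\partial(\alpha J)(u_\alpha)$, and then uses that the conjugate of the support function $\alpha J=\sigma_{\alpha\partial J(0)}$ is the indicator of $\alpha\partial J(0)$, so the dual problem is the projection of $f$ onto that set. You instead stay entirely on the primal side: you characterize the minimizer by $f-\hat u\in\alpha\partial J(\hat u)$, use the exposed-face identity $\partial\sigma_C(u)=\{h\in C:\langle h,u\rangle_{L^2}=\sigma_C(u)\}$, and match this against the variational inequality characterizing the metric projection of $f$ onto $C=\alpha\partial J(0)$. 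Both proofs ultimately identify $u_\alpha=f-P_C(f)$; yours is more self-contained (it replaces the appeal to duality theory with two elementary facts of Hilbert-space convex analysis, each easily verified by hand), while the paper's is shorter because it delegates the extremality relations to a cited reference. The only points worth making explicit in a polished write-up are the sum rule justifying the first-order condition (valid since $\tfrac12\|\cdot-f\|^2_{L^2}$ is continuous) and the fact that $\sigma_C(\hat u)=\alpha J(\hat u)<\infty$ at the minimizer, both of which are immediate.
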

\begin{proof}
The dual problem associated to \eqref{eq:rof} is given by
\begin{equation}\label{eq:rofdual}
	\min_{w\in L^2(\Omega)} \frac{1}{2}\|w-f\|^2_{L^2} + (\alpha J)^*(w),
\end{equation}
where the asterisk stands for convex conjugation. The two solutions $u_\alpha$ and $w_\alpha$ of \eqref{eq:rof} and \eqref{eq:rofdual}, respectively, satisfy the optimality conditions
\begin{equation}\label{eq:rofoptimality}
\begin{aligned}
	u_\alpha	&= f - w_\alpha, \\
	w_\alpha	&\in \partial (\alpha J)(u_\alpha).
\end{aligned}
\end{equation}
Concerning the derivation of \eqref{eq:rofdual} and \eqref{eq:rofoptimality} we refer to \cite[Chap.\ III, Rem.\ 4.2]{Ekeland2}. Since $\alpha J$ is the support function of the set $\alpha \partial J(0)$, recall \eqref{eq:tvsupportfunction}, its conjugate is the characteristic function
\begin{equation*}
	(\alpha J)^*(w) =
	\begin{cases}
		0, 		& w \in \alpha \partial J(0), \\
		+\infty,& w \notin \alpha \partial J(0).
	\end{cases}
\end{equation*}
Therefore, problem \eqref{eq:rofdual} is equivalent to
\begin{equation*}
	\min_{w\in \alpha \partial J(0)} \|w-f\|_{L^2}.
\end{equation*}
Finally, using the optimality condition \eqref{eq:rofoptimality} we get
$$ \|u_\alpha\|_{L^2} = \|w_\alpha-f\|_{L^2} = \min_{w\in \alpha \partial J(0)}\|w-f\|_{L^2} = \min_{u\in f -  \alpha \partial J(0)}\|u\|_{L^2}.$$
\end{proof}

\section{The averaging operator $A_G$}\label{sec:main}

Let $\Omega$ be a rectilinear $n$-polytope and $G$ a grid. Define the averaging operator $A_{G}:L^{1}(\Omega)\rightarrow PCR_{G}\left(\Omega\right)$ by
\begin{align*}
 A_{G}g=\sum^{N}_{i=1}\left(\frac{1}{\left|P_{i}\right|}\int_{P_{i}}g(s)ds\right)\mathbf{1}_{P_{i}},
\end{align*}
where $P_i \in \mathcal{Q}(G)$ and $\left|P_{i}\right|$ is its $n$-dimensional volume. 
\par Two properties of the operator $A_{G}$ turn out to be important when establishing the main result of this paper, Theorem \ref{thm:fpcrupcr}. The first one is
\begin{lemma} \label{lemma:operatorA}
For every $u\in L^{1}(\Omega)$ and convex $\varphi:\mathbb{R} \to \mathbb{R}$
\begin{align*}
 \int_{\Omega}\varphi\left((A_{G}u)(x)\right)dx\leq\int_{\Omega}\varphi\left(u(x)\right)dx.
 \end{align*}
\end{lemma}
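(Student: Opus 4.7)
The plan is to recognize this as a straightforward cell-wise application of Jensen's inequality. Since $A_G u$ is constant on each cell $P_i \in \mathcal{Q}(G)$, with value $c_i := |P_i|^{-1}\int_{P_i} u\, ds$, the left-hand side splits as a finite sum
$$\int_\Omega \varphi(A_G u)\, dx = \sum_{i=1}^N |P_i|\, \varphi(c_i).$$
For each $P_i$, the probability measure $|P_i|^{-1}\mathbf{1}_{P_i}\, dx$ has barycentre $c_i$, so Jensen's inequality applied to the convex function $\varphi$ yields
$$\varphi(c_i) = \varphi\!\left(\frac{1}{|P_i|}\int_{P_i} u\, ds\right) \le \frac{1}{|P_i|}\int_{P_i} \varphi(u)\, ds.$$
Multiplying by $|P_i|$, summing over $i$, and using that $\{P_i\}$ is a partition of $\Omega$ (up to a null set) gives the claim.

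The only points requiring mild care are the integrability and the applicability of Jensen's inequality. Since $\varphi$ is convex on all of $\mathbb{R}$, it is continuous and hence Borel-measurable, so $\varphi(u)$ and $\varphi(A_G u)$ are measurable; moreover $\varphi$ is bounded below by an affine function, say $\varphi(t) \ge at+b$. This lower affine bound guarantees that $u \in L^1(\Omega)$ implies the negative part of $\varphi(u)$ is integrable, so $\int_\Omega \varphi(u)\, dx$ is well-defined in $(-\infty, +\infty]$; likewise $c_i$ is finite and $\varphi(c_i)$ is well-defined. The hypothesis of Jensen's inequality (either $\varphi(u)\in L^1(P_i)$ or the integral is $+\infty$) is therefore met in every cell, and if $\int_{P_i}\varphi(u)=+\infty$ for some $i$ the inequality for that cell is trivial.

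There is no real obstacle here — the result is essentially a discrete-conditional-expectation version of Jensen's inequality, since $A_G$ is precisely the $L^1$-conditional expectation with respect to the finite $\sigma$-algebra generated by $\mathcal{Q}(G)$, for which the inequality $\int \varphi(\mathbb{E}[u\mid \mathcal{F}]) \le \int \varphi(u)$ is standard. The only step that deserves a brief sentence in the write-up is the reduction to cell-wise Jensen via the piecewise-constant structure of $A_G u$; everything else is bookkeeping.
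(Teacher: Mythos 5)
Your proof is correct and follows essentially the same route as the paper: decompose the integral over the cells of $\mathcal{Q}(G)$ and apply Jensen's inequality on each cell. The additional remarks on measurability and the affine lower bound for $\varphi$ are fine but not needed beyond what the paper's one-line computation already contains.
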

\begin{proof}
By applying Jensen's inequality we obtain
\begin{align*}
 \int_{\Omega}\varphi\left((A_{G}u)(x)\right)dx &=\sum^{N}_{i=1}\int_{P_{i}}\varphi\left((A_{G}u)(x)\right)dx=\sum^{N}_{i=1}\varphi\left(\frac{1}{\left|P_{i}\right|}\int_{P_{i}}u(x)dx\right)\left|P_{i}\right|\\
 &\leq\sum^{N}_{i=1}\int_{P_{i}}\varphi(u(x))dx=\int_{\Omega}\varphi(u(x))dx.
\end{align*}
\end{proof}
\begin{remark}\label{rem:operatorA}
Lemma \ref{lemma:operatorA} implies in particular that $A_G$ is a contraction,
\begin{align*}
 \lVert A_{G}\rVert_{L^{p}\rightarrow L^{p}}\leq 1, \quad 1\leq p < \infty.
\end{align*}
\end{remark}
The second property of $A_G$ is that it maps subgradients of $J$ to subgradients. Recall that $G(\Omega)$ is the smallest grid covering the entire boundary of $\Omega$.
\begin{theorem} \label{prop:projection}
Let $G$ be a grid containing $G(\Omega)$. Then $A_{G}\left(\partial J(0)\right) \subset \partial J(0)$.
\end{theorem}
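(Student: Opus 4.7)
The plan is to work with the identification $\partial J(0) = \overline{\divergence \mathcal{B}_1}$ from \eqref{eq:tvsupportfunction} together with the auxiliary sets $\Gamma_G$. Because $A_G$ is an $L^2$-contraction (Remark \ref{rem:operatorA}) and $\partial J(0)$ is $L^2$-closed, it will suffice to prove the two inclusions
\begin{align*}
 A_G(\divergence \mathcal{B}_1) \subset \Gamma_G \quad \text{and} \quad \Gamma_G \subset \partial J(0);
\end{align*}
the theorem then follows by approximating a general $h\in \partial J(0)$ in $L^2$ by divergences of fields in $\mathcal{B}_1$ and passing to the limit.

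For the first inclusion I would fix $H\in \mathcal{B}_1$ and decompose $A_G(\divergence H)=\sum_{j=1}^n g_j$ with $g_j := A_G(\partial_j H_j)$, then show $g_j \in \Gamma_G^j$. Since $G \supset G(\Omega)$, every cell $P_i\in \mathcal{Q}(G)$ is a hyperrectangle and factorises as $P_i = \hat{P}_i\times [d_{l-1},d_l]$, where $\hat P_i$ is its projection onto the coordinates orthogonal to $e_j$ and $[d_{l-1},d_l]$ is its interval in the $x_j$-direction. Fubini then gives
\begin{align*}
 g_j\big|_{P_i} = \frac{\tilde H_j(\hat P_i,d_l) - \tilde H_j(\hat P_i,d_{l-1})}{d_l-d_{l-1}}, \quad \tilde H_j(\hat P_i,c) := \frac{1}{|\hat P_i|}\int_{\hat P_i}H_j(\hat y,c)\,d\hat y.
\end{align*}
Integrating $g_j$ in the $x_j$-direction across a slice $I_j^k = (a_j^k,b_j^k)$ produces a telescoping sum, so that $s\mapsto \int_{a_j^k}^s g_j\,dx_j$ equals the piecewise linear interpolation of the values $\tilde H_j(\hat P_i,d_l)$ with nodes at the grid points $d_l$. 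Because $G \supset G(\Omega)$, the interval $I_j^k$ is the same for every $\hat y\in \hat P_i$, so both $(\hat y,a_j^k)$ and $(\hat y,b_j^k)$ lie on $\partial\Omega$; the compact support of $H$ then forces $\tilde H_j = 0$ at the slice endpoints, while $|\tilde H_j|\leq \|H_j\|_\infty\leq 1$ throughout. The two defining conditions of $\Gamma_G^j$ follow immediately.

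For the second inclusion, given $g=\sum_j g_j \in \Gamma_G$, I would write down the candidate vector field $\tilde V = \sum_j \tilde V_j e_j$ with $\tilde V_j(\hat x,x_j) := \int_{a_j^k(\hat x)}^{x_j} g_j\,dt$ on each $j$-slice. By construction $\divergence \tilde V = g$ distributionally, $|\tilde V_j|\leq 1$, and $\tilde V\cdot\nu = 0$ on $\partial\Omega$, because on a face perpendicular to $e_l$ only $\tilde V_l$ enters the normal trace and it vanishes there by the zero-integral condition. Multiplying the zero-extension of $\tilde V$ by a product cutoff $\chi_\epsilon = \prod_l \eta(\sigma_l/\epsilon)$, where $\sigma_l$ is the distance to the closest face of $\partial\Omega$ perpendicular to $e_l$, and mollifying, should yield a sequence $V^\epsilon \in \mathcal{B}_1$ whose divergences approximate $g$ in $L^2$.

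I expect this last approximation to be the main obstacle: the cutoff must simultaneously make $V^\epsilon$ compactly supported in $\Omega$ and preserve $\|V^\epsilon_j\|_\infty\leq 1$, while the commutator term $\partial_j \chi_\epsilon \cdot \tilde V_j^\epsilon$ must not spoil the $L^2$-convergence of the divergence. The product-type cutoff is tailored to the rectilinear geometry: $\partial_j \chi_\epsilon$ is supported in an $\epsilon$-strip around $e_j$-perpendicular faces, and precisely there $|\tilde V_j| = O(\epsilon)$, so the commutator error is $O(\sqrt{\epsilon})$ in $L^2$. Once this is carried out, taking $L^2$-limits together with the closedness of $\partial J(0)$ concludes the proof.
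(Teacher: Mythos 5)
Your overall architecture --- reduce to $\divergence\mathcal{B}_1$ via $\partial J(0)=\overline{\divergence\mathcal{B}_1}$, pass through $\Gamma_G$, and close up using continuity of $A_G$ --- is the paper's, and your first inclusion is essentially Lemma~\ref{lemma:step1}: the factorization of the cells into a common cross-section $\hat P_i$ times consecutive $x_j$-intervals, the telescoping, and the identification of $s\mapsto\int_{a_j^k}^{s}A_G\partial_jH_j\,dx_j$ as the piecewise linear interpolant of the averaged traces of $H_j$ on the grid hyperplanes is exactly the computation in the paper, and it is correct.

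The gap is in the second inclusion, and it sits exactly where you predicted it would. The claim that $\partial_j\chi_\epsilon$ is supported where $|\tilde V_j|=O(\epsilon)$ fails near the $(n-2)$-dimensional edges of $\partial\Omega$, in particular near re-entrant features. Concretely, take the domain of Figure~\ref{fig:intervals} and $g=g_1$ with $g_1=\mathbf{1}_{(0.5,1.5)\times(0.5,1)}-\mathbf{1}_{(4.5,5.5)\times(0.5,1)}$; this lies in $\Gamma^1_G$ for a suitable $G$, yet $\tilde V_1\equiv 1$ on $(1.5,4.5)\times(0.5,1)$, i.e.\ up to the lower face of the hole. If $\sigma_1$ is the Euclidean distance to the nearest $e_1$-perpendicular face, then on the $O(\epsilon^2)$-neighbourhoods of the corners $(2,1)$ and $(4,1)$ you have $|\partial_1\chi_\epsilon|=O(1/\epsilon)$ while $|\tilde V_1|=1$ (the nearby face $\{2\}\times[1,2]$ does \emph{not} terminate the $x_1$-slice of these points), so the commutator has $L^2$-norm $O(1)$, not $O(\sqrt{\epsilon})$. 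If instead you take $\sigma_1$ to be the distance to the endpoint of the point's own $x_1$-slice, then $\chi_\epsilon$ jumps across $\{x_2=1\}$ where the slice decomposition changes, and $\divergence(\chi_\epsilon\tilde V)$ acquires a singular term $[\chi_\epsilon]\,\tilde V_2\,d\mathcal{H}^{n-1}$ that survives mollification. The paper sidesteps the cutoff entirely: Lemma~\ref{lemma:step2} modifies $g$ itself near $\partial\Omega$ (set to zero on an outer strip of width $1/j$, doubled on the adjacent strip in the $x_i$-direction), checks that the modified function still satisfies the defining inequalities of $\Gamma^i$ and converges to $g$ in $L^2$, and only then integrates and mollifies (Lemma~\ref{lemma:step3}); there is no commutator to control because the primitive of the modified function already vanishes identically near the slice endpoints. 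To repair your argument you would either have to import that doubling trick, or give a genuinely new treatment of the edge neighbourhoods; the $O(\sqrt\epsilon)$ bookkeeping alone does not close the proof.
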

\begin{proof}
The proof is divided into three steps, each being proved in a separate lemma
	$$ 	A_{G}\left(\partial J(0)\right) \stackrel{\mathrm{Lem.\ }\ref{lemma:step1}}{\subset}
		\Gamma_G \subset
		\Gamma \stackrel{\mathrm{Lem.\ }\ref{lemma:step2}}{\subset}
		\overline{\Gamma_c} \stackrel{\mathrm{Lem.\ }\ref{lemma:step3}}{\subset}
		\partial J(0).
	$$
Note that the inclusion $\Gamma_G \subset \Gamma$ is trivial.
\end{proof}

\begin{lemma} \label{lemma:step1}
Let $G$ be a grid containing $G(\Omega)$. Then $A_{G}\left(\partial J (0)\right)\subset\Gamma_{G}$.
\end{lemma}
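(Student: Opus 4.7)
The plan is to first verify that $A_G(\divergence \mathcal{B}_1) \subset \Gamma_G$ on the generating set, and then extend to the $L^2$-closure $\partial J(0) = \overline{\divergence \mathcal{B}_1}$ by continuity. For $H \in \mathcal{B}_1$, linearity of $A_G$ gives the decomposition $A_G \divergence H = \sum_{i=1}^n g_i$ with $g_i := A_G \partial_i H_i$. Membership of each $g_i$ in $PCR_G$ is immediate, so what remains is to check the integral constraints defining $\Gamma_G^i$ slicewise.

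Fix an index $i$ and a transverse coordinate $\hat x = (x_1,\ldots,x_{i-1},x_{i+1},\ldots,x_n)$. Because $G \supset G(\Omega)$, every element of $\mathcal{Q}(G)$ is a genuine hyperrectangle, and all cells that the line $\{(\hat x, t) : t \in I_i^k\}$ traverses share the same $(n-1)$-dimensional cross-section $R$ depending only on $\hat x$. Along this line the cells therefore have the form $R \times (t_{\ell-1}, t_\ell)$, where $a_i^k = t_0 < t_1 < \ldots < t_M = b_i^k$ are the $x_i$-positions of the hyperplanes of $G$ bounding these cells. The fundamental theorem of calculus gives the constant value of $g_i$ on $(t_{\ell-1}, t_\ell)$ as $|R|^{-1}(t_\ell - t_{\ell-1})^{-1}\int_R [H_i(\hat y, t_\ell) - H_i(\hat y, t_{\ell-1})] d\hat y$, so telescoping yields, for every breakpoint $t_L$, the identity $\int_{a_i^k}^{t_L} g_i \, dx_i = |R|^{-1}\int_R [H_i(\hat y, t_L) - H_i(\hat y, a_i^k)] d\hat y$, while between breakpoints the partial integral is linear in its upper limit.

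The crucial geometric step, which is exactly where the hypothesis $G \supset G(\Omega)$ is used, is to observe that $H_i$ vanishes on the entire boundary cross-sections $R \times \{a_i^k\}$ and $R \times \{b_i^k\}$. Consider the open grid cell $R \times (s, a_i^k)$ immediately to the left of the first interior cell along the line, where $s$ is the next grid coordinate below $a_i^k$; this set does not meet $\bigcup G \supset \partial \Omega$, so it is either contained in $\Omega$ or disjoint from $\overline\Omega$. Its $\hat x$-slice lies outside $\Omega$ by the very definition of $I_i^k$, so the latter alternative holds, which forces the face $R \times \{a_i^k\}$ to lie in $\partial\Omega$, where $H$ vanishes by compact support. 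The symmetric argument at $b_i^k$ is identical. Substituting this into the telescoping identity gives the zero-mean condition $\int_{a_i^k}^{b_i^k} g_i \, dx_i = 0$, and for $s \in [t_L, t_{L+1}]$ the partial integral $\int_{a_i^k}^s g_i\, dx_i$ emerges as a convex combination of $|R|^{-1}\int_R H_i(\hat y, t_L) d\hat y$ and $|R|^{-1}\int_R H_i(\hat y, t_{L+1}) d\hat y$, each of modulus at most $1$ because $\|H_i\|_\infty \le 1$.

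Finally, since $A_G$ is an $L^2$-contraction by Remark \ref{rem:operatorA} and $\Gamma_G$ sits inside the finite-dimensional space $PCR_G$ as the intersection of finitely many closed affine slab conditions, $\Gamma_G$ is closed in $L^2$; the inclusion $A_G(\partial J(0)) \subset \Gamma_G$ then follows by approximating any $h \in \partial J(0)$ in $L^2$ by divergences of fields in $\mathcal{B}_1$. The main obstacle is the geometric argument in the third paragraph, namely upgrading the vanishing of $H_i$ at the single slice point $(\hat x, a_i^k)$ to vanishing on the full face $R \times \{a_i^k\}$; this step uses $G \supset G(\Omega)$ in an essential way and would break down for a general grid.
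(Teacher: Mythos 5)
Your proof is correct and follows essentially the same route as the paper's: reduce to $\divergence\mathcal{B}_1$ via closedness of $\Gamma_G$ and continuity of $A_G$, then verify the two slicewise conditions by the telescoping/convex-combination computation, using $G\supset G(\Omega)$ to guarantee that the traversed cells share a common cross-section. You actually spell out one point the paper leaves implicit, namely why $H_i$ vanishes on the entire faces $R\times\{a_i^k\}$ and $R\times\{b_i^k\}$ and not merely at the single boundary point of the slice.
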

\begin{proof}
Throughout this proof we exploit the fact that $\partial J (0) = \overline{\divergence \mathcal{B}_1}$, recall equation \eqref{eq:divBa}.

First, note that $PCR_G$ is a finite-dimensional subspace of $L^2$ and that the sets $\Gamma^{i}_{G}$, $i=1,...,n$, are bounded and closed subsets of $PCR_{G}$. It follows that $\Gamma_{G}$ is a closed subset of $PCR_G$ and in particular of $L^2$. Therefore, it suffices to show $A_{G}\left(\divergence\mathcal{B}_{1}\right)\subset\Gamma_{G}$, as we then have
$ A_G ( \overline{\divergence\mathcal{B}_{1}}) \subset \overline{A_G (\divergence\mathcal{B}_{1} )} \subset \overline{\Gamma_{G}} = \Gamma_G $, because $A_G$ is continuous.

Take $H=(H_1,\ldots,H_n)\in \mathcal{B}_{1}$. We want to show that $A_G\partial H_i /\partial x_i \in \Gamma_G^i$, that is,
\begin{align*} 
\underset{s\in\left(a^k_{i},b^k_{i}\right)}{\sup}\left|\int^{s}_{a^k_{i}}A_{G} \frac{\partial H_{i}}{\partial x_{i}} dx_{i}\right| \leq 1, \qquad \text{and} \qquad
 \int^{b^k_{i}}_{a^k_{i}}A_{G}\frac{\partial H_{i}}{\partial x_{i}}dx_{i}=0,
\end{align*}
for $i=1,\ldots,n$ and each $k$, where $\left(a^k_{i},b^k_{i}\right)$ are the intervals defined in equation \eqref{eq:intervals}.

Consider the second integral first. From the definition of $A_G$ it follows that $(a^k_i,b^k_i)$ can be divided into a finite number of subintervals in such a way that the integrand is constant on each. In addition the assumption $G \supset G(\Omega)$ implies that the partition $\mathcal{Q}(G)$ consists of hyperrectangles only. Thus, after a potential relabelling of the $R_j\in\mathcal{Q}(G)$, we can write
\begin{align*} 
	\int^{b^k_{i}}_{a^k_{i}}A_{G}\frac{\partial H_{i}}{\partial x_{i}} dx_{i}
		&= \sum_{j=1}^M \int_{s_{j-1}}^{s_j} \left( \frac{1}{|R_j|} \int_{R_j} \frac{\partial H_{i}}{\partial x_{i}} \,dx \right) \, dx_i \nonumber \\
		&= \sum_{j=1}^M \frac{s_j-s_{j-1}}{|R_j|} \int_{R_j} \frac{\partial H_{i}}{\partial x_{i}} \,dx
	\intertext{for some $ M \in \{1,\ldots, N\}$ and $a^k_i = s_0 < s_1 < \cdots < s_M = b^k_i.$ Note that $|R_j|/(s_j-s_{j-1})$ is the $(n-1)$-dimensional volume of $\partial R_j \cap \partial R_{j+1}$, and that this volume is independent of $j \in \{1,\ldots M\}$. In other words, the hyperrectangles $R_j$ only differ in their extent in $x_i$-direction, compare Figure \ref{fig:grid}, bottom right. The reason is that $\mathcal{Q}(G)$ is not an arbitrary partition of $\Omega$ into hyperrectangles, but rather formed by a grid. Setting $C=(s_j-s_{j-1})/|R_j|$ we further obtain} \nonumber
		&= C \int_{\bigcup_j R_j} \frac{\partial H_{i}}{\partial x_{i}} \,dx. \nonumber
	\intertext{The remaining integral can be computed by turning it into an iterated one, integrating with respect to $x_i$ first, and recalling that $H$ is compactly supported}
		&= C \underbracket[.5pt]{\idotsint}_{n-1} \int_{a_i^k}^{b_i^k} \frac{\partial H_{i}}{\partial x_{i}} \, dx_i
	 		= C \idotsint H_{i}\Big|_{x_i=a_i^k}^{x_i=b_i^k} = 0. \nonumber
\end{align*}
Here $F \big|_{x_i=a}^{x_i=b}$ stands for $F(x_i=b)-F(x_i=a)$, where $F(x_i=c)$ denotes the restriction of $F$ to the affine hyperplane defined by $x_i=c$.

Now integrate up to an arbitrary $s\in(a^k_i,b^k_i]$. We can assume $s\in(s_{\ell-1},s_\ell]$ for some $\ell\in\{1,\ldots,M\}$ and a brief computation similar to the one above shows that
\begin{align*}
	\int^s_{a^k_{i}}A_{G}\frac{\partial H_{i}}{\partial x_{i}} dx_{i}
		&= C \idotsint H_{i}\Big|_{x_i=a_i^k}^{x_i=s_{\ell-1}} + \frac{s-s_{\ell-1}}{|R_\ell|} \idotsint H_{i}\Big|_{x_i=s_{\ell-1}}^{x_i=s} \\
		&= C \idotsint H_{i}(x_i=s_{\ell-1}) + \frac{s-s_{\ell-1}}{|R_\ell|} \idotsint H_{i}\Big|_{x_i=s_{\ell-1}}^{x_i=s}.
	\intertext{Recalling that we can write $C = (s_\ell - s_{\ell-1})/|R_\ell|$ we rearrange terms}
		&= \frac{s_\ell-s}{|R_\ell|} \idotsint H_{i}(x_i=s_{\ell-1}) + \frac{s-s_{\ell-1}}{|R_\ell|} \idotsint H_{i}(x_i=s).
	\intertext{Finally, we estimate $H_i \le 1$ and obtain}
		&\le  \frac{s_\ell-s}{|R_\ell|} \frac{|R_\ell|}{s_\ell-s_{\ell-1}} + \frac{s-s_{\ell-1}}{|R_\ell|} \frac{|R_\ell|}{s_\ell-s_{\ell-1}} = 1.
\end{align*}
Similarly, we get $\int^s_{a^k_{i}}A_{G}\frac{\partial H_{i}}{\partial x_{i}} dx_{i} \ge -1$. Thus we have $A_{G}\partial H_{i} / \partial x_{i} \in \Gamma_G^i$.
\end{proof}

\begin{lemma}\label{lemma:step2}
$\Gamma\subset\overline{\Gamma_{c}}$.
\end{lemma}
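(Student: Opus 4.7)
Given $g\in\Gamma$, there is a grid $G$ with $g \in \Gamma_G$; after enlarging $G$ to contain $G(\Omega)$ (so that $\mathcal{Q}(G)$ consists of hyperrectangles), write $g = \sum_{i=1}^n g_i$ with $g_i \in \Gamma^i_G$. The plan is, for each small $\varepsilon > 0$, to construct a refined grid $G_\varepsilon \supset G$ and approximations $\tilde g_i^\varepsilon \in \Gamma^i_{G_\varepsilon, c}$ with $\tilde g_i^\varepsilon \to g_i$ in $L^2$. Summing over $i$ will then yield $\sum_i \tilde g_i^\varepsilon \in \Gamma_{G_\varepsilon, c} \subset \Gamma_c$ converging to $g$, giving $g \in \overline{\Gamma_c}$.

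The refined grid $G_\varepsilon$ will be obtained by adjoining to $G$, for every $j\in\{1,\ldots,n\}$ and every $c$ with $\{x_j = c\} \in G(\Omega)$, the pair of hyperplanes $\{x_j = c \pm \varepsilon\}$. Let $\Omega_\varepsilon$ be the rectilinear $\varepsilon$-interior of $\Omega$ in the $\ell^\infty$-sense; for $\varepsilon$ smaller than half the minimum gap in $G$, one has $\Omega_\varepsilon \subset\subset \Omega$ with $\partial\Omega_\varepsilon$ covered by $G_\varepsilon$. I will then set
$$\tilde g_i^\varepsilon := \lambda_\varepsilon \bigl(g_i - \mu_i^\varepsilon\bigr)\,\mathbf{1}_{\Omega_\varepsilon},$$
where $\mu_i^\varepsilon(x_{\hat{i}})$ is the piecewise constant function of $x_{\hat{i}}$ equal to the mean of $g_i$ over the $x_i$-interior of the interval $I_i^k(x_{\hat{i}})$, chosen precisely so that $\int_{a_i^k}^{b_i^k} \tilde g_i^\varepsilon\,dx_i = 0$ along every $x_i$-line, and $\lambda_\varepsilon = 1/(1 + O(\varepsilon))$ is a scalar rescaling securing the partial-sum bound $\bigl|\int_{a_i^k}^s \tilde g_i^\varepsilon\,dx_i\bigr| \leq 1$.

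The key verifications are routine: since the endpoints $a_i^k(x_{\hat{i}})$ and $b_i^k(x_{\hat{i}})$ take values in the finite set of $x_i$-coordinates of hyperplanes in $G(\Omega)$, both $\Omega_\varepsilon$ and $\mu_i^\varepsilon$ are piecewise constant on $\mathcal{Q}(G_\varepsilon)$, hence so is $\tilde g_i^\varepsilon$. Compact support in $\Omega$ is immediate from $\overline{\Omega_\varepsilon} \subset \Omega$. The integral conditions follow from the choice of $\mu_i^\varepsilon$ and $\lambda_\varepsilon$; the bound $|\mu_i^\varepsilon| = O(\varepsilon)$ needed to make $\lambda_\varepsilon \to 1$ follows from $\int_{a_i^k}^{b_i^k} g_i\,dx_i = 0$, $\|g_i\|_\infty < \infty$, and the minimum interval length $b_i^k - a_i^k$ being bounded below (since $G$ is a finite grid). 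Finally, $L^2$-convergence holds because $\Omega\setminus\Omega_\varepsilon$ has volume $O(\varepsilon)$ and the pointwise modification inside $\Omega_\varepsilon$ is $O(\varepsilon)$.

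The main obstacle is preserving the rectilinear grid structure while enforcing compact support. A first, natural attempt would be a piecewise affine ``inward stretching'' that sends each $I_i^k(x_{\hat{i}})$ affinely onto $[a_i^k + \varepsilon, b_i^k - \varepsilon]$; this fails because the rescaled function acquires $x_i$-breakpoints whose positions depend continuously on $x_{\hat{i}}$, so it is no longer piecewise constant on any rectilinear grid. The ``cut off the $\varepsilon$-skin and subtract a piecewise constant mean correction'' strategy sidesteps this obstruction precisely because $a_i^k$ and $b_i^k$ assume only finitely many values (namely coordinates of $G(\Omega)$), so the shifted hyperplanes $\{x_j = c \pm \varepsilon\}$ form a fixed finite collection and the entire construction fits on the single refined grid $G_\varepsilon$.
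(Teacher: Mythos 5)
Your overall plan (cut off an $\varepsilon$-skin, restore the two defining conditions of $\Gamma^i$ by subtracting a mean and rescaling, pass to the limit in $L^2$) is reasonable, and your closing remark about why a piecewise affine inward stretching destroys the rectilinear grid structure is a correct and relevant observation. For comparison, the paper's construction is different in mechanism: it sets $h_i$ to zero on a $1/j$-neighbourhood of $\partial\Omega$ and \emph{doubles} $h_i$ on the adjacent strips lying next to the faces perpendicular to the $x_i$-axis, so that along each $x_i$-line the running integral is repaired \emph{locally}, immediately after each zeroed-out piece, rather than by a single global shift $\mu_i^\varepsilon$ plus a scalar factor $\lambda_\varepsilon$.

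There is, however, a genuine gap: you tacitly assume that every $x_i$-line meets $\Omega_\varepsilon$ in the shrunk interval $(a_i^k+\varepsilon,\,b_i^k-\varepsilon)$. Since $\Omega$ is only a rectilinear polytope, a line can \emph{graze} an interior boundary feature. Take $\Omega=\bigl((0,4)\times(0,2)\bigr)\setminus\bigl([1,2]\times[1,2]\bigr)$ and $i=1$: for $x_2\in(1-\varepsilon,1)$ one has $I_1^1=(0,4)$, yet
\begin{equation*}
\{x_1:(x_1,x_2)\in\Omega_\varepsilon\}=(\varepsilon,1-\varepsilon)\cup(2+\varepsilon,4-\varepsilon),
\end{equation*}
a disconnected set omitting a chunk of length $1+2\varepsilon$ in the middle of $I_1^1$. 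Such grazing lines form a set of transversal measure $O(\varepsilon)$ but positive, so they cannot be ignored for membership in $\Gamma^i_{G_\varepsilon,c}$. On them your verifications fail: (i) with $\mu_i^\varepsilon$ the mean over $(a_i^k+\varepsilon,b_i^k-\varepsilon)$, the integral of $\tilde g_i^\varepsilon$ over $I_1^1$ is \emph{not} zero, because the actual integration is over the smaller slice; (ii) if you instead take $\mu_i^\varepsilon$ to be the mean over the true slice, it is no longer $O(\varepsilon)$, since the omitted chunk has length $O(1)$ and $\int g_i$ over it need not be small, and then the partial-sum bound can be violated by an amount bounded away from zero (a comb-shaped $\Omega$ with three notches and a unit sawtooth $g_1$ yields a running integral of about $6/5$ on grazing lines); hence no $\lambda_\varepsilon\to 1$ restores the bound, while a $\lambda_\varepsilon$ bounded away from $1$ destroys $L^2$-convergence on the remaining lines. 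The construction is repairable, e.g.\ by setting $\tilde g_i^\varepsilon\equiv 0$ on all grazing lines (this preserves $PCR_{G_\varepsilon}$ membership, makes the conditions trivial there, and costs only $O(\varepsilon)$ in $L^2$), but as written the steps you call routine break down precisely where the non-convexity of $\Omega$ enters.
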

\begin{proof}
Let $j\in \mathbb{N}$ and define $\Omega_j \subset \Omega$ by removing strips of width $1/j$ from the boundary of $\Omega$. It is assumed that $j$ is chosen large enough such that the strips are contained in $\Omega$. See Figure \ref{fig:strip1} for an example of the construction of $\Omega_j$ in the plane.
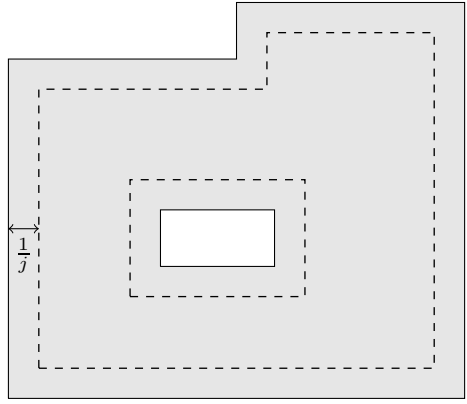
\begin{figure}
\centering
\begin{tikzpicture}
 \draw[fill=black!10] (-2,-1.25) -- (4,-1.25) -- (4,4) -- (1,4) -- (1,3.25) -- (-2,3.25) -- (-2,-1.25);
 \draw[fill=white] (0,0.5) -- (1.5,0.5) -- (1.5,1.25) -- (0,1.25) -- (0,0.5);
 \draw[dashed,line width=0.5pt] (-1.6,-0.85) -- (3.6,-0.85) -- (3.6,3.6) -- (1.4,3.6) -- (1.4,2.85) -- (-1.6,2.85) -- (-1.6,-0.85);
 \draw[dashed,line width=0.5pt] (-0.4,0.1) -- (1.9,0.1) -- (1.9,1.65) -- (-0.4,1.65) -- (-0.4,0.1);
\draw[<->] (-2,1) -- node[below] {$\frac{1}{j}$} (-1.6,1);
 \end{tikzpicture}
 \caption{The construction of $\Omega_j$ by removing strips of width $1/j$ from $\Omega$.}
 \label{fig:strip1}
\end{figure}
Next, for $i=1,\ldots,n$ we define $\Omega^i_j\subset \Omega_j$, by removing strips of width $1/j$ from those parts of the boundary of $\Omega_j$ which are orthogonal to the $x_{i}$-axis. By choosing $j$ large enough, the strips will be contained in $\Omega_j$. For an illustration of the construction of $\Omega^i_j$ in the plane, see Figure \ref{fig:strip2}.
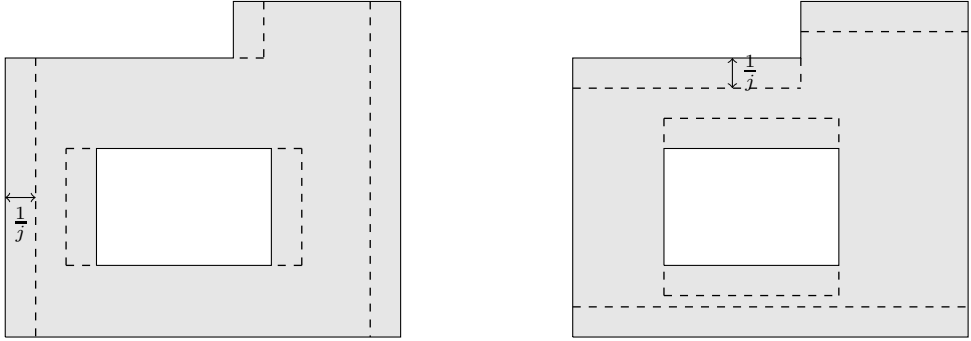
\begin{figure}
\centering
\begin{tikzpicture}
 \draw[fill=black!10] (-1.6,-0.85) -- (3.6,-0.85) -- (3.6,3.6) -- (1.4,3.6) -- (1.4,2.85) -- (-1.6,2.85) -- (-1.6,-0.85);
 \draw[fill=white] (-0.4,0.1) -- (1.9,0.1) -- (1.9,1.65) -- (-0.4,1.65) -- (-0.4,0.1);
 \draw[dashed,line width=0.5pt] (3.2,3.6) -- (3.2,-0.85);
 \draw[dashed,line width=0.5pt] (1.8,3.6) -- (1.8,2.85);
 \draw[dashed,line width=0.5pt] (1.8,2.85) -- (1.4,2.85);
 \draw[dashed,line width=0.5pt] (-1.2,2.85) -- (-1.2,-0.85);
 \draw[dashed,line width=0.5pt] (-0.8,0.1) -- (-0.4,0.1);
 \draw[dashed,line width=0.5pt] (2.3,0.1) -- (1.9,0.1);
 \draw[dashed,line width=0.5pt] (2.3,0.1) -- (2.3,1.65);
 \draw[dashed,line width=0.5pt] (2.3,1.65) -- (1.9,1.65);
 \draw[dashed,line width=0.5pt] (-0.8,1.65) -- (-0.4,1.65);
 \draw[dashed,line width=0.5pt] (-0.8,0.1) -- (-0.8,1.65);
 \draw[<->] (-1.6,1) -- node[below] {$\frac{1}{j}$} (-1.2,1);
 \end{tikzpicture}
 \hfill
 \begin{tikzpicture}
 \draw[fill=black!10] (-1.6,-0.85) -- (3.6,-0.85) -- (3.6,3.6) -- (1.4,3.6) -- (1.4,2.85) -- (-1.6,2.85) -- (-1.6,-0.85);
 \draw[fill=white] (-0.4,0.1) -- (1.9,0.1) -- (1.9,1.65) -- (-0.4,1.65) -- (-0.4,0.1);
 \draw[dashed,line width=0.5pt] (-1.6,-0.45) -- (3.6,-0.45);
 \draw[dashed,line width=0.5pt] (1.4,2.85) -- (1.4,2.45);
 \draw[dashed,line width=0.5pt] (-1.6,2.45) -- (1.4,2.45);
 \draw[dashed,line width=0.5pt] (1.4,3.2) -- (3.6,3.2);
 \draw[dashed,line width=0.5pt] (-0.4,-0.3) -- (1.9,-0.3);
 \draw[dashed,line width=0.5pt] (1.9,-0.3) -- (1.9,0.1);
 \draw[dashed,line width=0.5pt] (1.9,2.05) -- (1.9,1.65);
 \draw[dashed,line width=0.5pt] (-0.4,2.05) -- (1.9,2.05);
 \draw[dashed,line width=0.5pt] (-0.4,2.05) -- (-0.4,1.65);
 \draw[dashed,line width=0.5pt] (-0.4,-0.3) -- (-0.4,0.1);
 \draw[<->] (0.5,2.85) -- node[right] {$\frac{1}{j}$} (0.5,2.45);
 \end{tikzpicture}
 \caption{The construction of $\Omega^1_{j}$ (left) and $\Omega^2_{j}$ (right) by removing strips from $\Omega_j$.}
 \label{fig:strip2}
\end{figure}

Take $h\in\Gamma$. So, $h\in\Gamma_{G}$ for some $G\in\mathcal{G}$ and in particular $h=\sum^{n}_{i=1}h_{i}$ where $h_{i}\in\Gamma^{i}_{G}$. Let $g_{j}=\sum^{n}_{i=1}g_{j,i}$ where
\begin{align*}
	g_{j,i}(x) =
	\begin{cases}
		0,			& \text{if } x\in\Omega\setminus\Omega_j,\\
		2h_{i}(x),	& \text{if } x\in \Omega_j \setminus \Omega^i_j,\\
		h_{i}(x),	& \text{otherwise.}
	\end{cases}
\end{align*}
Note that there is a grid $G_{j} \supset G$ such that $g_{j,i}\in PCR_{G_j}$ for every $i=1,\ldots,n$, and that for $j$ large enough
\begin{align*}
	\left| \int_{a_i^k}^{s} g_{j,i} \, dx_i \right| \le \left| \int_{a_i^k}^{s} h_i \, dx_i \right|
\end{align*}
for every interval $(a_i^k,b_i^k)$, recall equation \eqref{eq:intervals}, and $s\in (a_i^k,b_i^k]$.
It follows that $g_{j,i}\in\Gamma^{i}_{G_{j},c}$ and therefore $g_{j}\in\Gamma_{G_{j},c}$. Finally, it can be directly verified that
\begin{align*}
	\underset{j\rightarrow\infty}{\lim }\lVert g_{j}-h\rVert_{L^{2}}=0.
\end{align*}
As $h\in\Gamma$ was chosen arbitrarily we conclude that $\Gamma\subset\overline{\Gamma_{c}}$.
\end{proof}

\begin{lemma}\label{lemma:step3}
$\overline{\Gamma_{c}} \subset \partial J (0)$.
\end{lemma}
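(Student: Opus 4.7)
The plan is to exploit the identity $\partial J(0) = \overline{\divergence\mathcal{B}_1}$ from \eqref{eq:divBa}. Since $\partial J(0)$ is closed in $L^2$, it suffices to show $\Gamma_c \subset \partial J(0)$, and for this it is enough to exhibit, for any $h \in \Gamma_{G,c}$, a sequence $H^{(\varepsilon)} \in \mathcal{B}_1$ with $\divergence H^{(\varepsilon)} \to h$ in $L^2(\Omega)$.

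Fix $h = \sum_{i=1}^n h_i$ with $h_i \in \Gamma^i_{G,c}$. The key idea is to construct, for each $i$, a scalar potential $F_i$ with $|F_i| \le 1$, $\partial F_i/\partial x_i = h_i$, and compact support in $\Omega$, and then mollify. For fixed $(x_1,\ldots,\hat{x}_i,\ldots,x_n)$ and for $x_i$ in the $k$-th cross-section interval $(a_i^k,b_i^k)$ from \eqref{eq:intervals}, I define
\begin{equation*}
	F_i(x) = \int_{a_i^k}^{x_i} h_i(x_1,\ldots,x_{i-1},t,x_{i+1},\ldots,x_n)\, dt,
\end{equation*}
and extend $F_i$ by zero outside $\Omega$. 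The first defining inequality of $\Gamma_G^i$ gives $|F_i| \le 1$ pointwise.

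Next I verify that $F_i$ is compactly supported in $\Omega$. The compact support of $h_i$ in $\Omega$ provides a compact $K \subset \Omega$ outside which $h_i \equiv 0$. Then for $(x_1,\ldots,\hat{x}_i,\ldots,x_n)$ outside the projection of $K$, the integrand vanishes identically on the cross section, so $F_i = 0$. For fixed cross section, $F_i$ is zero for $x_i$ below the $x_i$-range of $K$, and for $x_i$ above this range it equals $\int_{a_i^k}^{b_i^k} h_i \, dx_i = 0$ by the zero-mean condition of $\Gamma_G^i$; this also ensures $F_i$ is continuous across the hyperplanes $x_i = a_i^k$ and $x_i = b_i^k$, so the extension by zero creates no singular part in $\partial F_i / \partial x_i$. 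Thus $F_i \in L^\infty(\mathbb{R}^n)$ is compactly supported in $\Omega$, and its distributional derivative satisfies $\partial F_i/\partial x_i = h_i$ (extended by zero).

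Now pick a standard nonnegative mollifier $\eta_\varepsilon$ with $\int \eta_\varepsilon = 1$, and set $H_i^{(\varepsilon)} = F_i * \eta_\varepsilon$. For $\varepsilon$ smaller than the distance from $\supp F_i$ to $\partial\Omega$, $H_i^{(\varepsilon)} \in C_c^\infty(\Omega)$; moreover $|H_i^{(\varepsilon)}| \le \|\eta_\varepsilon\|_{L^1}\|F_i\|_{L^\infty} \le 1$, so $H^{(\varepsilon)} := (H_1^{(\varepsilon)},\ldots,H_n^{(\varepsilon)}) \in \mathcal{B}_1$. Because mollification commutes with distributional differentiation, $\partial H_i^{(\varepsilon)}/\partial x_i = h_i * \eta_\varepsilon \to h_i$ in $L^2(\Omega)$ as $\varepsilon \to 0$ (using that $h_i \in L^2$, being bounded with compact support). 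Summing in $i$ yields $\divergence H^{(\varepsilon)} \to h$ in $L^2$, which places $h \in \overline{\divergence \mathcal{B}_1} = \partial J(0)$ and completes the proof after passing to the closure.

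The main subtlety I anticipate is the bookkeeping around the compact support of $F_i$: one must use both the compact support of $h_i$ (to handle boundary pieces orthogonal to $x_j$ with $j \ne i$) and the vanishing-integral condition (to handle the pieces orthogonal to $x_i$), in order to ensure that extending $F_i$ by zero does not spoil the identity $\partial F_i/\partial x_i = h_i$ as a distribution on $\mathbb{R}^n$.
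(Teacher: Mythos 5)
Your proof is correct and follows essentially the same route as the paper: define the potential $H_i$ by integrating $h_i$ in the $x_i$-direction from $a_i^k$, use the defining conditions of $\Gamma^i_{G,c}$ to get $\|H_i\|_{L^\infty}\le 1$ and compact support in $\Omega$, then mollify to land in $\mathcal{B}_1$ and pass to the limit in $L^2$. The extra care you take in verifying that the zero extension of $F_i$ has compact support and produces no singular part in $\partial F_i/\partial x_i$ fills in details the paper only asserts.
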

\begin{proof}
As in Lemma \ref{lemma:step1} we use the fact that $\partial J (0) = \overline{\divergence\mathcal{B}_{1}}.$

Take $h\in\Gamma_{c}$. So there is a grid $G$ such that $h=\sum^{n}_{i=1}h_{i}\in\Gamma_{G,c}$ where $h_{i}\in\Gamma^{i}_{G,c}$.
From $h$ we now construct a vector field $H=\left(H_{1},\ldots,H_{n}\right)$.
For every $i \in \{1,\ldots,n\}$ and $x \in \Omega$ there is a unique interval $I_i^k = (a_i^k,b_i^k)$ containing $x_i,$ recall equation \eqref{eq:intervals}. Based on this observation we define the components of $H$ by
\begin{align*}
	H_{i}(x)=\int^{x_{i}}_{a^k_i}h_{i}(x_1,\ldots,x_{i-1},s,x_{i+1}\ldots, x_n)\,ds.
\end{align*}
It follows that $\lVert H_{i}\rVert_{L^{\infty}}\leq1$ and $\supp(H_{i})\subset\Omega$. $H$ is now modified into a vector field belonging to $\mathcal{B}_{1}$. Let $\{\rho_{j}\}_{j\in\mathbb{N}}$ denote a sequence of mollifiers on $\mathbb{R}^{n}$ supported on the closed Euclidean ball centred at $0$ with radius $1/j$. Recalling standard results regarding convolution and mollifiers, we derive  $\lVert H_{i}*\rho_{j}\rVert_{L^{\infty}}\leq\lVert H_{i}\rVert_{L^{\infty}}\lVert\rho_{j}\rVert_{L^{1}}=\lVert H_{i}\rVert_{L^{\infty}}\leq1$ and moreover, for $j$ large enough, $H_{i}*\rho_{j}\in C^{\infty}_{c}\left(\Omega\right)$. Hence, for $j\in\mathbb{N}$ large enough, the modification $H_{\rho_{j}}$ of $H$ given by
\begin{align*}
	H_{\rho_{j}}=\left(H_{1}*\rho_{j},\ldots,H_{n}*\rho_{j}\right)
\end{align*}
is in $\mathcal{B}_{1}$. It follows that $h\in\overline{\divergence\mathcal{B}_{1}}$, as
\begin{align*}
	\left\| \divergence H_{\rho_{j}}-h \right\|_{L^{2}}
		&=	\Big\| \sum_{i=1}^n \Big( \frac{\partial}{\partial x_{i}} (H_{i}*\rho_{j}) - h_i \Big) \Big\|_{L^{2}}
			= \Big\| \sum_{i=1}^n \left( h_{i}*\rho_{j} - h_i \right) \Big\|_{L^{2}} \\
		&\le\sum_{i=1}^n \Big\| h_{i}*\rho_{j} - h_i \Big\|_{L^{2}} \xrightarrow{j\to\infty} 0.
\end{align*}
The element $h\in\Gamma_{c}$ was chosen arbitrarily and $\overline{\divergence\mathcal{B}_{1}}$ is closed, therefore 
$\overline{\Gamma_{c}}\subset\overline{\divergence\mathcal{B}_{1}}$.
\end{proof}

\subsection{Preservation of piecewise constancy}
We are now ready to prove the following result.
\begin{theorem}\label{thm:fpcrupcr}
Given $f\in PCR$ with minimal grid $G_{f}$, the minimizer $u_{\alpha}$ of the corresponding anisotropic ROF functional
\begin{equation*}
	\min_{u\in L^2(\Omega)} \frac{1}{2}\|u-f\|^2_{L^2} + \alpha J(u)
\end{equation*}
lies in $PCR_{G_{f}}$.
\end{theorem}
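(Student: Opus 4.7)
The strategy is to exploit the dual reformulation from Lemma~\ref{lemma:rofdual} together with the two properties of the averaging operator $A_{G_f}$: the contraction property (Lemma~\ref{lemma:operatorA}) and the subgradient-preservation property (Theorem~\ref{prop:projection}). Heuristically, if the unique minimizer $u_\alpha$ of the ROF functional were not already piecewise constant on $G_f$, then applying $A_{G_f}$ to $u_\alpha$ would yield a strictly better feasible candidate, giving a contradiction.

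\textbf{Step-by-step.} First, by Lemma~\ref{lemma:rofdual}, $u_\alpha$ is characterized as the (unique, by strict convexity of $\|\cdot\|_{L^2}^2$) element of smallest $L^2$-norm inside the affine set $f-\alpha\partial J(0)$. Write $u_\alpha=f-w_\alpha$ with $w_\alpha\in\alpha\partial J(0)$. Next, I would apply $A_{G_f}$ to both sides, using linearity:
\begin{equation*}
  A_{G_f}u_\alpha \;=\; A_{G_f}f - A_{G_f}w_\alpha.
\end{equation*}
Since $f\in PCR_{G_f}$, the averaging is a no-op on $f$, giving $A_{G_f}f=f$. To invoke Theorem~\ref{prop:projection}, I need $G_f\supset G(\Omega)$; this follows because the level sets $P_i$ of $f$ partition $\Omega$, so $\partial\Omega\subset\bigcup_i\partial P_i$, and hence the smallest grid covering all $\partial P_i$ must a fortiori cover $\partial\Omega$. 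Theorem~\ref{prop:projection} then yields $A_{G_f}w_\alpha\in\alpha\partial J(0)$ (by linearity of $A_{G_f}$ applied to the inclusion $A_{G_f}(\partial J(0))\subset\partial J(0)$). Consequently $A_{G_f}u_\alpha\in f-\alpha\partial J(0)$, i.e.\ $A_{G_f}u_\alpha$ is a feasible competitor.

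\textbf{Conclusion via minimality.} From Lemma~\ref{lemma:operatorA} with $\varphi(t)=t^2$ we get $\|A_{G_f}u_\alpha\|_{L^2}\le\|u_\alpha\|_{L^2}$. Since $u_\alpha$ is the unique minimizer over $f-\alpha\partial J(0)$ and $A_{G_f}u_\alpha$ is also feasible, we must have $A_{G_f}u_\alpha=u_\alpha$. As $A_{G_f}u_\alpha\in PCR_{G_f}$ by construction, this gives $u_\alpha\in PCR_{G_f}$. (Alternatively, strict Jensen's inequality shows that the estimate is strict unless $u_\alpha$ is a.e.\ constant on each cell of $\mathcal{Q}(G_f)$, which would contradict minimality.)

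\textbf{Main obstacle.} All the heavy lifting has already been done in Theorem~\ref{prop:projection}; the only nontrivial point here is the verification that $G_f\supset G(\Omega)$ so that Theorem~\ref{prop:projection} is applicable, but this is a direct consequence of the definition of $G_f$ and the fact that the level sets of $f$ cover $\Omega$. The remainder of the argument is a clean application of duality together with the contraction and subgradient-preservation of $A_{G_f}$.
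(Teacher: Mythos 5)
Your proof is correct and follows essentially the same route as the paper: the dual reformulation of Lemma~\ref{lemma:rofdual}, feasibility of $A_{G_f}u_\alpha$ in $f-\alpha\partial J(0)$ via Theorem~\ref{prop:projection} together with $A_{G_f}f=f$, and the contraction property of Lemma~\ref{lemma:operatorA} to force $A_{G_f}u_\alpha=u_\alpha$ by uniqueness of the minimal-norm element. Your explicit verification that $G_f\supset G(\Omega)$ is a welcome detail the paper leaves implicit.
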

\begin{proof}
Recall that, according to Lemma \ref{lemma:rofdual}, $u_{\alpha}$ is the unique element with minimal $L^{2}$-norm in $f - \alpha\partial J(0)$. From Theorem \ref{prop:projection} and the fact that $A_{G_{f}}f = f$ it follows that also $A_{G_{f}}u_{\alpha}\in f - \alpha\partial J(0)$. As $\lVert A_{G_f}u_{\alpha}\rVert_{L^{2}}\leq\lVert u_{\alpha}\rVert_{L^{2}}$, because of Remark \ref{rem:operatorA}, we have $A_{G_{f}}u_{\alpha}=u_{\alpha}$. Therefore, $u_{\alpha}\in PCR_{G_{f}}$.
\end{proof}

\section{Conclusion} \label{sec:conclusion}

In \cite[Thm.\ 5]{Lasica1} the authors have shown that, for $\Omega$ being a rectilinear 2-polytope, $f \in PCR$ implies $u_\alpha \in PCR$. We have extended this preservation of piecewise constancy to rectilinear $n$-polytopes. Our proof can be summarized in the following way
	$$	\| A_{G_f} u_\alpha \|_{L^2}
			\stackrel{\text{Lem.~\ref{lemma:operatorA}}}{\le}
		\| u_\alpha \|_{L^2}
			\stackrel{\text{Lem.~\ref{lemma:rofdual}}}{=}
		\min_{u \in f - \alpha \partial J(0)} \| u \|_{L^2}
			\stackrel{\text{Thm.~\ref{prop:projection}}}{\le}
		\| A_{G_f} u_\alpha \|_{L^2}.
	$$
The crucial step is Theorem \ref{prop:projection}, asserting that
\begin{equation*} 
	A_G (\partial J(0)) \subset \partial J(0),
\end{equation*}
which exploits the fact that the anisotropy of $J$ is compatible with the rectilinearity of the grid $G$.

\subsection*{Acknowledgements}
We acknowledge support by the Austrian Science Fund (FWF) within the national research network ``Geometry $+$ Simulation," S117, subproject 4.

\end{document}